\newtheorem{thm}{Theorem}[section]
\crefname{thm}{Theorem}{Theorems}
\newtheorem{lem}{Lemma}[section]
\crefname{lem}{Lemma}{Lemmas}
\theoremstyle{definition}
\newtheorem{assump}{Assumption}
\crefname{assump}{Assumption}{Assumptions}
\newtheorem{rem}{Remark}
\crefname{rem}{Remark}{Remarks}
\newtheorem{defi}{Definition}
\crefname{defi}{Definition}{Definitions}
\newcommand{\bC}{\mathbb{C}}
\newcommand{\bR}{\mathbb{R}}
\newcommand{\cT}{\mathcal{T}}
\newcommand{\re}{\operatorname{Re}}
\newcommand{\dual}[3]{{}_{#3'}\langle #1, #2 \rangle_{#3}}
\DeclareMathOperator{\dv}{div}
\title{On the finite element approximation for non-stationary saddle-point problems
}
\titlerunning{FEM for non-stationary saddle-point problems}        
\author{Tomoya Kemmochi}
\institute{T. Kemmochi \at
              Graduate School of Mathematical Sciences, The University of Tokyo \\
              3-8-1 Komaba, Meguro-ku, Tokyo, 153-8914, Japan \\
              Tel.: +81-3-5465-7001\\
              \email{kemmochi@ms.u-tokyo.ac.jp}
}
\date{Received: date / Accepted: date}
\begin{document}

\maketitle

\begin{abstract}
In this paper, we present a numerical analysis of the hydrostatic Stokes equations, 
which are linearization of the primitive equations describing the geophysical flows of the ocean and the atmosphere.
The hydrostatic Stokes equations can be formulated as an abstract non-stationary saddle-point problem, 
which also includes the non-stationary Stokes equations.
We first consider the finite element approximation for the abstract equations with a pair of spaces under the discrete inf-sup condition.
The aim of this paper is to establish error estimates for the approximated solutions in various norms, 
in the framework of analytic semigroup theory.
Our main contribution is an error estimate for the pressure with a natural singularity term $t^{-1}$, 
which is induced by the analyticity of the semigroup.
We also present applications of the error estimates for the finite element approximations of the non-stationary Stokes and the hydrostatic Stokes equations.
\keywords{%
hydrostatric Stokes equations 
\and 
primitive equations 
\and 
non-stationary Stokes equations 
\and 
finite element method 
\and 
analytic semigroup
}
\subclass{%
65M60
\and 
76M10
\and 
76D07
}
\end{abstract}

\section{Introduction}

Let $\Omega = (0,1)^2 \times (-D,0) \subset \bR^3$ be a (shallow) box domain with $D>0$.
We consider the (non-stationary) hydrostatic Stokes equations 
\begin{equation}
\begin{cases}
u_t - \Delta u + \nabla_H p = 0, & \text{in } \Omega \times (0,T), \\
\dv_H \bar{v} = 0, & \text{in } \Omega \times (0,T), \\
u(0) = u_0, & \text{in } \Omega
\end{cases}
\label{eq:h-stokes-intro}
\end{equation}
for an unknown velocity $u \colon \Omega \to \bR^2$ and pressure $p \colon G:=(0,1)^2 \to \bR$, where
$\nabla_H p = (\partial_x p, \partial_y p)^T$, $\dv_H v = \partial_x v_1 + \partial_y v_2$,
and $\bar{v} = \int_{-D}^0 v(\cdot,z)dz$.
We impose the boundary conditions
\begin{equation}
\begin{cases}
\partial_z u = 0, & \text{on } \Gamma_u := G \times \{ 0 \}, \\
u = 0, & \text{on } \Gamma_b := G \times \{ -D \}, \\
\text{$u$ and $p$ are periodic} & \text{on } \Gamma_l := \partial G \times (-D,0).
\end{cases}
\label{eq:h-stokes-bc-intro}
\end{equation}
These are the linearized equations of the primitive equations (without the Coriolis force) described as
\begin{equation}
\begin{cases}
\partial_t u + (U \cdot \nabla) u - \Delta u + \nabla_H p = 0, & \text{in } \Omega \times (0,T), \\
\partial_z p = 0, & \text{in } \Omega \times (0,T), \\
\dv U = 0, & \text{in } \Omega \times (0,T), \\
u(0) = u_0, & \text{in } \Omega
\end{cases}
\label{eq:primitive}
\end{equation}
with boundary conditions
\begin{equation}
\begin{cases}
\partial_z u = 0, u_3 = 0, & \text{on } \Gamma_u, \\
U = 0, & \text{on } \Gamma_b, \\
\text{$U$ and $p$ are periodic} & \text{on } \Gamma_l,
\end{cases}
\label{eq:primitive-bc}
\end{equation}
where $U = (u, u_3) \colon \Omega \to \bR^3$.
The primitive equations are derived from the Navier-Stokes equations under the assumption that the vertical motion is much smaller than the horizontal motion, 
and were first introduced by Lions, Temam, and Wang~\cite{LioTW92,LioTW92b,LioTW95}.
This model is considered to describe the geophysical flows of the ocean and the atmosphere.

In this paper, we consider the finite element approximation of the hydrostatic Stokes problem \eqref{eq:h-stokes-intro} and \eqref{eq:h-stokes-bc-intro} as follows. 
Find $u_h \colon (0,T) \to V_h$ and $p_h \colon (0,T) \to Q_h$ satisfying the variational equation
\begin{equation}
\begin{cases}
(u_{h,t}, v_h)_\Omega + (\nabla u_h, \nabla v_h)_\Omega - (\dv_H \bar{v}_h, p_h)_G = 0, & \forall v_h \in V_h, \\
(\dv_H \bar{u}_h, q_h)_G = 0, & \forall q_h \in Q_h, \\
(u_h(0), v_h)_\Omega = (u_0,v_h)_\Omega, & \forall v_h \in V_{h,\sigma},
\end{cases}
\end{equation}
where $V_h \subset H^1(\Omega)^2$ and $Q_h \subset L^2_0(G)$ are finite-dimensional subspaces with suitable boundary conditions,
the bracket $(\cdot, \cdot)_X$ expresses the $L^2$-inner product over a domain $X$,
and $V_{h,\sigma} = \{ v_h \in V_h \mid (\dv_H \bar{v}_h, q_h)_G = 0,  \forall q_h \in Q_h \}$.
The precise definitions are given in Section~\ref{sec:application}.
The aim of this paper is to establish error estimates for $u_h$ and $p_h$ in the framework of analytic semigroup theory, as preliminaries for the numerical analysis of the primitive equations.
Although there are several results available on the finite element method for the steady hydrostatic Stokes equations (e.g.,~\cite{GuiR15,GuiR15b,GuiR16b,GuiR16}), there are no results for the non-stationary case, to the best of our knowledge.

As with the Navier-Stokes equations, the primitive equations are widely used in numerical computations for atmospheric and oceanic phenomena.
Finite element approximations and error estimates are presented in \cite{ChaR05,GuiR05,ChaGS12,ChaGGS15} for the steady primitive equations and in \cite{ChaR04,HeZXC16,GuiR17} for non-stationary problems.
In \cite{HeZXC16} and \cite{GuiR17}, error estimates for various fully discretized schemes are provided; these estimates have exponential growth in time $T$ (i.e., $e^{cT}$), since their arguments are based on the discrete Gronwall inequality.
Therefore, these results are time-local estimates in a sense.

We are interested in time-global error estimates for finite element approximations of the hydrostatic Stokes and the primitive equations.
In contrast to the three-dimensional Navier-Stokes equations, it is known that the three-dimensional primitive equations are globally well-posed in the $L^p$-settings (see~\cite{CaoT07} for $p=2$, and \cite{HieK16} for $p \in (1,\infty)$).
In their proofs, the analyticity of the hydrostatic Stokes semigroup plays a crucial role. 
We can thus expect that the analytic semigroup approach presents an efficient method for the numerical analysis of the primitive equations.
Indeed, for the two-dimensional Navier-Stokes equations, time-global error estimates have been obtained via the analytic semigroup approach in~\cite{Oka82b}.
Their results are based on the error estimates for the non-stationary Stokes equations established in \cite{Oka82}.

In order to derive error estimates for the finite element approximation, we formulate the hydrostatic Stokes equations as an abstract evolution problem.
We define
\begin{equation}
V = \{ v \in H^1(\Omega)^2 \mid v|_{\Gamma_b} = 0, \text{$v$ is periodic on $\Gamma_l$} \}.
\label{eq:V}
\end{equation}
Additionally, let $Q = L^2_0(G)$ and $H=L^2(\Omega)^2$.
Then, a weak form of the hydrostatic Stokes equations \eqref{eq:h-stokes-bc-intro} can be given as a non-stationary saddle-point problem as follows.
Find $u \colon (0,T) \to V$ and $p \colon (0,T) \to Q$ satisfying
\begin{equation}
\begin{cases}
(u_t(t), v)_H + a(u(t),v) + b(v,p(t)) = 0 , & \forall v \in V, \\
b(u(t),q) = 0, & \forall q \in Q,
\end{cases}
\label{eq:saddle-intro}
\end{equation}
where
\begin{equation}
a(u,v) = \iiint_\Omega \nabla u : \nabla v \,dxdydz,
\quad 
b(v,q) = - \iint_G (\dv_H \bar{v}) q \,dxdy
\end{equation}
for $u,v \in V$ and $q \in Q$.
Once we write the hydrostatic Stokes equations as above, 
we can use the same arguments for error estimates as is the case for the usual Stokes problem in~\cite{Oka82}.
Then, we can obtain the following error estimates for the velocity:
\begin{align}
\| \nabla (u(t) - u_h(t)) \|_{L^2(\Omega)} &\le C h t^{-1} \|u_0\|_{L^2(\Omega)} , \\
\| u(t) - u_h(t) \|_{L^2(\Omega)} &\le C h^2 t^{-1} \|u_0\|_{L^2(\Omega)},
\end{align}
under the assumptions stated in Section~\ref{sec:preliminary} (\cref{assump:inf-sup,assump:resolvent}).
In particular, the discrete inf-sup condition \eqref{eq:disc-infsup} plays an important role as in the stationary case.

According to \cite{Oka82}, we can also obtain an error estimate for the pressure.
The estimate presented in \cite{Oka82} is
\begin{equation}
\| p(t) - p_h(t) \|_{L^2} \le Ch (t^{-1} + t^{-3/2}) \|u_0\|_{L^2}
\end{equation}
for the two-dimensional Stokes equations
\begin{equation}
\begin{cases}
u_t - \Delta u + \nabla p = 0, \\
\dv u = 0, \\
u(0) = u_0
\end{cases}
\label{eq:stokes-intro}
\end{equation}
with the Dirichlet boundary condition.
However, the singularity $t^{-3/2}$ is unnatural from the viewpoint of analytic semigroup theory.
Indeed, an optimal order error estimate should be of the form
\begin{equation}
\| p(t) - p_h(t) \|_{L^2} \le C h \| \nabla p(t) \|_{L^2},
\end{equation}
and \eqref{eq:stokes-intro} yields
\begin{equation}
\| \nabla p(t) \|_{L^2} \le \| u_t(t) \|_{L^2} + \| \Delta u(t) \|_{L^2}
\le C t^{-1} \|u_0\|_{L^2}.
\end{equation}

In this paper, we first address the abstract problem \eqref{eq:saddle-intro} for bilinear forms $a \colon V \times V \to \bR$ and $b \colon V \times Q \to \bR$ defined on Hilbert spaces $V$ and $Q$,
and its Galerkin approximation
\begin{equation}
\begin{cases}
(u_{h,t}(t), v_h)_H + a(u_h(t),v_h) + b(v_h,p_h(t)) = 0 , & \forall v_h \in V_h, \\
b(u_h(t),q_h) = 0, & \forall q_h \in Q_h,
\end{cases}
\label{eq:disc-saddle-intro}
\end{equation}
for appropriate finite-dimensional subspaces $V_h \subset V$ and $Q_h \subset Q$.
The main contribution of this paper is to derive error estimates for the approximation problem \eqref{eq:disc-saddle-intro}, both for the velocity $u$ and the pressure $p$.
In particular, we remove the term $t^{-3/2}$ from the error estimate for the pressure.
Consequently, our results are a generalization and modification of the results in \cite{Oka82}.
After deriving the error estimates for \eqref{eq:disc-saddle-intro}, we apply the results to error estimates for the finite element approximation of the hydrostatic Stokes equations.

Here, we present the idea of the proof for error estimates in the case of the hydrostatic Stokes equations.
Our strategy is similar to that of \cite{Oka82}.
Namely, we first rewrite the error in terms of the contour integral of the resolvent and then we reduce the error estimate for the velocity to that of the resolvent problem.
The key idea is to establish the $V'$-error estimate for the resolvent problem as well as the $H^1$- and $L^2$-error estimates, which are already addressed in \cite{Oka82}.
Here, $V'$ denotes the dual space of $V$ defined by \eqref{eq:V}.
This estimate coincides with the $H^{-1}$-error estimate if we impose the Dirichlet boundary condition.
Then, we can obtain the $V'$-error estimate for the time derivative of the velocity of the form
\begin{equation}
\| u_t(t) - u_{h,t}(t) \|_{V'} \le Cht^{-1} \|u_0\|_{L^2(\Omega)}.
\end{equation}
Finally, we can establish an error estimate for the pressure without the term $t^{-3/2}$, with the aid of the discrete inf-sup condition.
We shall perform the above procedure in the abstract setting.

The rest of the paper is organized as follows.
In Section~\ref{sec:preliminary}, we introduce an abstract saddle-point problem and its Galerkin approximation, as well as the notation and assumptions in subsection~\ref{subsec:notation}.
After that, we introduce the resolvent problems and present some preliminary results in subsection~\ref{subsec:resolvents}.
Our main results are presented in Section~\ref{sec:abstract-error}.
As mentioned above, we derive the error estimate in the dual norm for the resolvent problem, and then we establish the error estimate for the evolution equation.
We apply these results for the Stokes and the hydrostatic Stokes equations in Section~\ref{sec:application}.
For the usual Stokes equations (subsection~\ref{subsec:stokes}), error estimates for the velocity are already available. 
However, the estimate for the pressure presented here is strictly sharper than that of \cite{Oka82}.
The error estimates for the non-stationary hydrostatic Stokes equations will be presented in subsection~\ref{subsec:h-stokes}.
Finally, we present our conclusions and areas for future works in Section~\ref{sec:conclusion}.

\section{Preliminaries}
\label{sec:preliminary}

\subsection{Notation and assumptions}
\label{subsec:notation}

Throughout this paper, except for in the last section, the symbols $H$, $V$, and $Q$ denote Hilbert spaces with dense and continuous injections $V \hookrightarrow H \hookrightarrow V'$, where $V'$ is the dual space of $V$, and $a \colon V \times V \to \bC$ and $b \colon V \times Q \to \bC$ are their continuous bilinear forms.
We assume that $a$ is symmetric for simplicity,
and that $a$ is coercive and $b$ satisfies the inf-sup condition:
\begin{gather}
\re a(v,v) \ge \alpha \|v\|_V, \quad \forall v \in V, \label{eq:coercive} \\
\sup_{v \in V} \frac{|b(v,q)|}{\|v\|_V} \ge \beta_1 \|q\|_Q, \quad \forall q \in Q, \label{eq:infsup}
\end{gather}
for some positive constants $\alpha$ and $\beta_1$.
We consider the following abstract non-stationary Stokes problem:
\begin{equation}
\begin{cases}
(u_t(t), v)_H + a(u(t),v) + b(v, p(t)) = 0, & \forall v \in V, \\
b(u(t), q) = 0, & \forall q \in Q, \\
u(0) = u_0,
\end{cases}
\label{eq:abstract-stokes}
\end{equation}
for $t \in (0,T)$, where $u_0 \in H_\sigma := \overline{V_\sigma}^{\| \cdot \|_H}$ and
\begin{equation}
V_\sigma := \{ v \in V \mid b(v,q) = 0, \ \forall q \in Q \}.
\end{equation}
We define a linear operator $A$ on $H_\sigma$ associated with the bilinear form $a$ by
\begin{equation}
\begin{cases}
D(A) = \{ u \in V_\sigma \mid \exists w \in H_\sigma \text{ s.t. } (w,v)_H = a(u,v), \ \forall v \in V_\sigma \}, \\
(Au, v)_H = a(u,v), \qquad \forall u \in D(A), \quad \forall v \in V_\sigma,
\end{cases}
\label{eq:operator-A}
\end{equation}
which is the abstract version of the Stokes operator.
By coercivity \eqref{eq:coercive} and the usual semigroup theory (e.g., see \cite{Paz83}), 
the operator $-A$ generates an analytic contraction semigroup $e^{-tA}$ on $H_\sigma$.
Thus, choosing $v \in V_\sigma$ as a test function in \eqref{eq:abstract-stokes}, 
we can construct a mild solution by $u(t) = e^{-tA}u_0$ for $t > 0$.
Moreover, owing to the inf-sup condition and the closed range theorem (see, e.g.,~\cite{ErnG04}), we can find $p(t) \in Q$ that satisfies
\begin{equation}
b(v,p(t)) = -(u_t(t), v)_H + a(u(t), v), \quad \forall v \in V,
\end{equation} 
for almost all $t \in (0,T)$. 
The uniqueness of these solutions is clear.
Finally, we define a linear operator $B \colon D(B) \subset Q \to H$ associated with $b$ by
\begin{equation}
\begin{cases}
D(B) = \{ q \in Q \mid \exists w \in H \text{ s.t. } (w,v)_H = b(v,q), \ \forall v \in V \}, \\
(Bq, v)_H = b(v,q), \qquad \forall q \in D(B), \quad \forall v \in V.
\end{cases}
\end{equation}

We next consider the Galerkin approximation for \eqref{eq:abstract-stokes}.
Let $V_h \subset V$ and $Q_h \subset Q$ be finite-dimensional subspaces.
We assume that they have the following properties:

\begin{assump}\label{assump:inf-sup}
\begin{enumerate}[label=(A-\arabic{*}),leftmargin=*]
\item{} [\textbf{discrete inf-sup condition}] There exists $\beta_2 > 0$ such that
\begin{equation}
\sup_{v_h \in V_h} \frac{|b(v_h,q_h)|}{\|v_h\|_V} \ge \beta_2 \|q_h\|_Q,
\quad \forall q_h \in Q_h,
\label{eq:disc-infsup}
\end{equation}
uniformly in $h>0$.
\item{} [approximation property (1)] For each $v \in D(A)$, we can find $v_h \in V_h$ satisfying
\begin{align}
\| v_h \|_V &\le C \|v\|_V, \\
\| v-v_h \|_H &\le C h \|v\|_V, \\
\| v-v_h \|_V &\le C h \|Av\|_H, \label{eq:A-1-3}
\end{align}
where $C$ is independent of $h$ and $v$.
\item{} [approximation property (2)] For each $q \in D(B)$, we can find $q_h \in Q_h$ satisfying
\begin{equation}
\| q-q_h \|_Q \le C h \|Bq\|_H,
\end{equation}
where $C$ is independent of $h$ and $q$.
\end{enumerate}
\qed
\end{assump}

\begin{rem}
The assumption (A-2) includes the condition on ``elliptic regularity''.
For example, let $\Omega \subset \bR^2$ be a polygonal domain, $H=L^2(\Omega)$, $V = H^1_0(\Omega)$, and $V_h$ be the conforming $P^1$-finite element space with respect to a shape-regular triangulation of $\Omega$.
Then, for every $v \in H^2(\Omega)$, we can construct $v_h \in V_h$ with the error estimate
\begin{equation}
\| v-v_h\|_{H^1_0} \le Ch \|v\|_{H^2}.
\end{equation}
However, if $A$ is the Laplace operator defined by
\begin{equation}
(Au, v)_{L^2} = (\nabla u, \nabla v)_{L^2}, \quad u,v \in H^1_0(\Omega),
\end{equation}
then the error estimate \eqref{eq:A-1-3} does not hold in general.
Indeed, \eqref{eq:A-1-3} requires that the condition $D(A) = H^2(\Omega) \cap H^1_0(\Omega)$ is satisfied, or equivalently,
\begin{equation}
\| v \|_{H^2} \le C \|Av\|_{L^2}
\end{equation}
for all $v \in D(A)$, which is not true for non-convex polygonal domains (see~\cite{Gri85}).
\qed
\end{rem}

We also introduce the discrete ``solenoidal'' space $V_{h,\sigma}$, defined as
\begin{equation}
V_{h,\sigma} := \{ v_h \in V_h \mid b(v_h,q_h) = 0, \ \forall q_h \in Q_h \}.
\label{eq:Vhsigma}
\end{equation}
Note that $V_{h,\sigma} \not\subset V_\sigma$ in general.
We now formulate the Galerkin (semi-discrete) approximation for the problem~\eqref{eq:abstract-stokes} as follows:
find $u_h(t) \in V_h$ and $q_h(t) \in Q_h$ that satisfy
\begin{equation}
\begin{cases}
(u_{h,t}(t), v_h)_H + a(u_h(t),v_h) + b(v_h, p_h(t)) = 0, & \forall v_h \in V_h, \\
b(u_h(t), q_h) = 0, & \forall q_h \in Q_h, \\
u_h(0) = P_{h,\sigma} u_0,
\end{cases}
\label{eq:disc-abstract-stokes}
\end{equation}
where $P_{h,\sigma} \colon H \to V_{h,\sigma}$ is the orthogonal projection.
We define the discrete ``Stokes operator'' $A_h$ by
\begin{equation}
(A_h u_h, v_h)_H = a(u_h, v_h), \quad \forall u_h, v_h \in V_{h,\sigma}.
\end{equation}
Then, as in the continuous case, the operator $-A_h$ generates an analytic contraction semigroup $e^{-tA_h}$ on $H_{h,\sigma} := (V_{h,\sigma}, \| \cdot \|_H)$, and thus we can construct a unique solution $(u_h, p_h)$ of the equation \eqref{eq:disc-abstract-stokes} due to the discrete inf-sup condition \eqref{eq:disc-infsup}.

\subsection{Finite element method for resolvent problems}
\label{subsec:resolvents}

We show error estimates for \eqref{eq:disc-abstract-stokes} via the resolvent estimates, as originally shown in~\cite{Oka82} for the non-stationary Stokes problem.
Let $\Gamma = \{ re^{\pm i(\pi - \delta)} \in \bC \mid r \in [0,\infty) \}$ be a path for $\delta \in (0,\pi/2)$, which is oriented so that the imaginary part increases along $\Gamma$.
Then, since the semigroups $e^{-tA}$ and $e^{-tA_h}$ are analytic, we can write
\begin{equation}
u(t) - u_h(t) 
= \frac{1}{2\pi i} \int_\Gamma e^{t\lambda} 
\left[ (\lambda + A)^{-1} - (\lambda + A_h) P_{h,\sigma} \right]
u_0 d\lambda.
\label{eq:duhamel}
\end{equation}
Therefore, the error estimate for $u_h$ is reduced to that of a resolvent problem:
\begin{equation}
\lambda (w,v)_H + a(w,v) = (g,v)_H, \quad \forall v \in V_\sigma,
\label{eq:resol}
\end{equation}
and
\begin{equation}
\lambda (w_h,v_h)_H + a(w_h,v_h) = (g,v_h)_H, \quad \forall v_h \in V_{h,\sigma},
\label{eq:disc-resol}
\end{equation}
for given $g \in H$ and $\lambda \in \Sigma_\delta := \{ z \in \bC \setminus \{0\} \mid |\arg z | < \pi - \delta \}$ with an arbitrarily fixed $\delta \in (0,\pi/2)$.
Owing to the closed range theorem, the problem \eqref{eq:resol} is equivalent to the following equation for $w$ and $\pi$:
\begin{equation}
\begin{cases}
\lambda (w,v)_H + a(w,v) + b(v,\pi) = (g,v)_H, & \forall v \in V, \\
b(w,q) = 0, & \forall q \in Q,
\end{cases}
\label{eq:resol-2}
\end{equation}
and \eqref{eq:disc-resol} is also equivalent to the problem 
\begin{equation}
\begin{cases}
\lambda (w_h,v_h)_H + a(w_h,v_h) + b(v_h,\pi_h) = (g,v_h)_H, & \forall v \in V_h, \\
b(w_h,q_h) = 0, & \forall q_h \in Q_h,
\end{cases}
\label{eq:disc-resol-2}
\end{equation}
where $w_h \in V_h$ and $\pi_h \in Q_h$ are unknown functions.
We assume that equation \eqref{eq:resol-2} admits the following estimate.
\begin{assump}\label{assump:resolvent}
(A-4) For each $g \in H$ and $\lambda \in \Sigma_\delta$, equation \eqref{eq:resol-2} has a unique solution $(w,\pi) \in V_\sigma \times Q$,
which admits the regularity $w \in D(A)$ and $\pi \in D(B)$.
Moreover, the following resolvent estimate holds:
\begin{equation}
|\lambda| \|w\|_H + |\lambda|^{1/2} \|w\|_V + \|Aw\|_H + \|B\pi\|_H \le C \|g\|_H,
\label{eq:resol-est}
\end{equation}
where $C$ is independent of $g$ and $\lambda$.
\qed
\end{assump}

It is known that assumptions (A-1)--(A-4) allow us to obtain error estimates for the velocity.
\begin{thm}\label{thm:resol-V}
Let $\delta \in (0,\pi/2)$, and
suppose that assumptions (A-1)--(A-4) hold.
Then, we have
\begin{align}
\left\| \left[ (\lambda + A)^{-1}P_\sigma - (\lambda + A_h)^{-1} P_{h,\sigma} \right] g \right\|_V 
&\le C h \|g\|_H, \label{eq:resol-error-1}\\
\left\| \left[ (\lambda + A)^{-1}P_\sigma - (\lambda + A_h)^{-1} P_{h,\sigma}  \right] g \right\|_H 
&\le C h^2 \|g\|_H, \label{eq:resol-error-2}
\end{align}
for any $g \in H$ and $\lambda \in \Sigma_\delta$, 
where $P_\sigma \colon H \to H_\sigma$ is the orthogonal projection and 
$C$ is independent of $h$, $g$, and $\lambda$.
\qed
\end{thm}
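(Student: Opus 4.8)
The plan is to fix $\lambda \in \Sigma_\delta$ and $g \in H$, set $w := (\lambda+A)^{-1}P_\sigma g$ and $w_h := (\lambda+A_h)^{-1}P_{h,\sigma}g$, and estimate $\|w-w_h\|_V$ and $\|w-w_h\|_H$ with constants independent of $\lambda$, $g$, $h$. By Assumption (A-4) the pair $(w,\pi)$ solves \eqref{eq:resol-2} with $w \in D(A)$, $\pi \in D(B)$ and $|\lambda|\|w\|_H + |\lambda|^{1/2}\|w\|_V + \|Aw\|_H + \|B\pi\|_H \le C\|g\|_H$, and $(w_h,\pi_h)$ solves \eqref{eq:disc-resol-2}. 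Subtracting \eqref{eq:disc-resol} from \eqref{eq:resol-2} tested against $v_h \in V_{h,\sigma}$, and using $b(v_h,q_h)=0$ for $q_h \in Q_h$, yields the (nonconforming) error equation
\[
\lambda (w - w_h, v_h)_H + a(w - w_h, v_h) = - b(v_h, \pi - q_h), \qquad \forall v_h \in V_{h,\sigma},\ \forall q_h \in Q_h,
\]
whose right-hand side is the pressure-consistency term caused by $V_{h,\sigma} \not\subset V_\sigma$.

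The key device is the stationary mixed (``Stokes--Ritz'') projection of $w$: let $(\tilde w_h, \tilde\pi_h) \in V_h \times Q_h$ solve $a(\tilde w_h, v_h) + b(v_h, \tilde\pi_h) = a(w, v_h) + b(v_h, \pi)$ for all $v_h \in V_h$ and $b(\tilde w_h, q_h) = 0$ for all $q_h \in Q_h$; this problem is well posed by the coercivity \eqref{eq:coercive} and the discrete inf-sup condition (A-1), and since $b(w,\cdot)=0$ one has $\tilde w_h \in V_{h,\sigma}$. By Brezzi's theory together with the approximation properties (A-2), (A-3) and then (A-4), $\|w - \tilde w_h\|_V \le Ch(\|Aw\|_H + \|B\pi\|_H) \le Ch\|g\|_H$; and by a duality (Aubin--Nitsche) argument relying on the elliptic regularity built into the assumptions, $\|w - \tilde w_h\|_H \le Ch^2(\|Aw\|_H + \|B\pi\|_H) \le Ch^2\|g\|_H$. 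Write $\eta := w - \tilde w_h$ and $\theta_h := w_h - \tilde w_h \in V_{h,\sigma}$. Because $\tilde w_h$ reproduces $a(w,\cdot)+b(\cdot,\pi)$ on $V_{h,\sigma}$, the pressure term in the error equation cancels and one is left with
\[
\lambda (\theta_h, v_h)_H + a(\theta_h, v_h) = \lambda (\eta, v_h)_H , \qquad \forall v_h \in V_{h,\sigma} ,
\]
that is, $\theta_h = \lambda(\lambda+A_h)^{-1}P_{h,\sigma}\eta$. Since $a|_{V_{h,\sigma}}$ is coercive and bounded with constants independent of $h$, the standard sectorial estimate (the same one underlying the analyticity of $e^{-tA_h}$, uniform in $h$) applied to this identity with $v_h = \theta_h$ gives $|\lambda|\|\theta_h\|_H^2 + \|\theta_h\|_V^2 \le C|\lambda|\|\eta\|_H^2$, hence $\|\theta_h\|_H \le C\|\eta\|_H$ and $\|\theta_h\|_V \le C|\lambda|^{1/2}\|\eta\|_H$.

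Now $w - w_h = \eta - \theta_h$. For the $H$-estimate, $\|w-w_h\|_H \le \|\eta\|_H + C\|\eta\|_H \le Ch^2\|g\|_H$ for every $\lambda \in \Sigma_\delta$, which is \eqref{eq:resol-error-2}. For the $V$-estimate, $\|w-w_h\|_V \le \|\eta\|_V + \|\theta_h\|_V \le Ch\|g\|_H + C|\lambda|^{1/2}h^2\|g\|_H$, and this already gives \eqref{eq:resol-error-1} when $|\lambda| \le h^{-2}$. In the complementary range $|\lambda| > h^{-2}$ the approximation-based bound degrades, so there one discards it and uses the triangle inequality together with the resolvent bound $\|w\|_V \le C|\lambda|^{-1/2}\|g\|_H$ from (A-4) and its uniform-in-$h$ discrete analogue $\|w_h\|_V \le C|\lambda|^{-1/2}\|g\|_H$, obtaining $\|w-w_h\|_V \le C|\lambda|^{-1/2}\|g\|_H < Ch\|g\|_H$. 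Combining the two ranges proves \eqref{eq:resol-error-1}.

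The main obstacle is the uniformity in $\lambda$: the natural energy estimate for $\theta_h$ carries a spurious factor $|\lambda|^{1/2}$ in the $V$-norm (inherited from $\theta_h = \lambda(\lambda+A_h)^{-1}P_{h,\sigma}\eta$ and the $V$-regularity of the discrete resolvent), so the approximation argument alone only reaches $|\lambda| \lesssim h^{-2}$ and the high-frequency part must be handled separately by the crude resolvent stability bound. A secondary point is the nonconformity $V_{h,\sigma} \not\subset V_\sigma$, which creates the pressure-consistency term; this is exactly what the stationary mixed projection $\tilde w_h$ is designed to absorb, at the price of needing its $O(h^2)$ $L^2$-estimate, and hence of using the elliptic regularity contained in Assumptions (A-2) and (A-4).
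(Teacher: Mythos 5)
Your proof is correct, but it follows a genuinely different route from the paper's. The paper treats the full mixed system $(w_h,\pi_h)$ in one shot: it establishes a quasi-optimality estimate in the $\lambda$-weighted norm $|\lambda|^{1/2}\|\cdot\|_H+\|\cdot\|_V+\|\cdot\|_Q$ via the generalized Lax--Milgram (Babu\v{s}ka--Brezzi) theorem, using the $\lambda$-uniform coercivity of $a_\lambda$ and the discrete inf-sup condition, and then inserts (A-2)--(A-4); this gives \eqref{eq:resol-error-1} uniformly in $\lambda$ with no case distinction, and \eqref{eq:resol-error-2} by a duality argument on the resolvent problem itself. You instead pass through the stationary mixed Ritz projection, reduce the remainder to the identity $\theta_h=\lambda(\lambda+A_h)^{-1}P_{h,\sigma}\eta$, and patch the two regimes $|\lambda|\le h^{-2}$ and $|\lambda|>h^{-2}$ together; your duality argument is needed only once, for the stationary projection. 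Two small points to tighten: (i) the well-posedness, error estimates, and $O(h^2)$ duality bound for the stationary projection use the regularity of the problem at $\lambda=0$, which is not literally in (A-4) since $0\notin\Sigma_\delta$; it does follow, because coercivity \eqref{eq:coercive} gives $0\in\rho(A)$ and the constant in (A-4) is $\lambda$-independent, but this deserves a sentence. (ii) The paper's proof produces as a byproduct the combined estimate \eqref{eq:thm21-5}, including $|\lambda|^{1/2}\|w-w_h\|_H\le Ch\|g\|_H$ and the discrete pressure error $\|\pi-\pi_h\|_Q\le Ch\|g\|_H$, both of which are reused in the proof of \cref{lem:error-ut}; your decomposition does not hand you these directly, so if you continued to the evolution estimates you would need to recover them (which your ingredients do permit, e.g.\ $|\lambda|^{1/2}\|w-w_h\|_H\le|\lambda|^{1/2}(\|\eta\|_H+\|\theta_h\|_H)\le C|\lambda|^{1/2}h^2\|g\|_H$ plus the crude bound in the high-frequency range, and the pressure error from the discrete inf-sup condition \eqref{eq:disc-infsup} as in part (3) of the proof of the main theorem).
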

An outline of the proof is given below. 
We refer the reader to \cite[Theorems~3.2 and~4.2]{Oka82} for the detailed proof.
\begin{proof}
Fix $g \in H$, $\delta \in (0,\pi/2)$, and $\lambda \in \Sigma_\delta$ arbitrarily.
Let $(w,\pi)$ and $(w_h,\pi_h)$ be the solutions of \eqref{eq:resol-2} and \eqref{eq:disc-resol-2}, respectively.
Choose $v_h \in V_h$ and $q_h \in Q_h$ arbitrarily and consider $w_h - v_h$ and $\pi_h - q_h$.
Substituting $\phi_h \in V_h$ and $\psi_h \in Q_h$ into \eqref{eq:resol-2} and \eqref{eq:disc-resol-2}, 
we have
\begin{align}
a_\lambda(w_h-v_h,\phi_h) + b(\phi_h,\pi_h-q_h)
&= a_\lambda(w-v_h,\phi_h) + b(\phi_h,\pi-q_h) \\
&=: \dual{F_h}{\phi_h}{V_h}, \quad \forall \phi_h \in V_h,
\label{eq:thm21-1}
\end{align}
and
\begin{equation}
b(w_h-v_h,\psi_h) = b(w-v_h,\psi_h)
=: \dual{G_h}{\psi_h}{Q_h}, \quad \forall \psi_h \in Q_h,
\label{eq:thm21-2}
\end{equation}
where
\begin{equation}
a_\lambda(u,v) = \lambda(u,v)_H + a(u,v)
\end{equation}
for $u,v \in V$.
By the elementary inequality $|s\lambda + t| \ge \sin(\delta/2)(s|\lambda|+t)$ for $\lambda \in \Sigma_{\pi-\delta}$ and $s,t \ge 0$, we can obtain 
\begin{equation}
|a_\lambda(v,v)| \ge \alpha_1 \|v\|_{H^1(\Omega)}^2
\end{equation}
for all $v \in V$, where $\alpha_1>0$ is independent of $\lambda$.
Therefore, equations \eqref{eq:thm21-1} and \eqref{eq:thm21-2}, the discrete inf-sup condition (A-1), and the generalized Lax-Milgram theorem (e.g., see~\cite[Theorem~2.34]{ErnG04}) yield
\begin{equation}
|\lambda|^{1/2} \|w_h-v_h\|_H + \|w_h-v_h\|_V + \|\pi_h-q_h\|_Q
\le C \left( \|F_h\|_{V_h'} + \|G_h\|_{Q_h'} \right)
\label{eq:thm21-3}
\end{equation}
for some constant $C>0$, which is independent of $h$ due to (A-1).
From the definition of $F_h$ and $G_h$, we have
\begin{equation}
\|F_h\|_{V_h'} + \|G_h\|_{Q_h'}
\le C \left( |\lambda|^{1/2} \|w-v_h\|_H + \|w-v_h\|_V + \|\pi-q_h\|_Q \right),
\end{equation}
which implies, together with \eqref{eq:thm21-3}, that
\begin{equation}
|\lambda|^{1/2} \|w-w_h\|_H + \|w-w_h\|_V + \|\pi-\pi_h\|_Q 
\le C \left( |\lambda|^{1/2} \|w-v_h\|_H + \|w-v_h\|_V + \|\pi-q_h\|_Q \right).
\end{equation}
Therefore, assumptions (A-2)--(A-4) lead to
\begin{equation}
|\lambda|^{1/2} \|w-w_h\|_H + \|w-w_h\|_V + \|\pi-\pi_h\|_Q 
\le C h \|g\|_H,
\label{eq:thm21-5}
\end{equation}
which implies \eqref{eq:resol-error-1}.

The error estimate \eqref{eq:resol-error-2} is demonstrated by the standard duality argument.
Consider the dual problem 
\begin{equation}
\begin{cases}
\lambda (\phi, \zeta)_H + a(\phi, \zeta) + b(\phi, \eta) = (\phi, w-w_h)_H, & \forall \phi \in V, \\
b(\zeta, \psi) = 0, & \forall \psi \in Q.
\end{cases}
\label{eq:thm21-4}
\end{equation}
The solution $(\zeta,\eta) \in V \times Q$ has the estimate
\begin{equation}
|\lambda| \|\zeta\|_H + |\lambda|^{1/2} \|\zeta\|_V + \|A\zeta\|_H + \|B\eta\|_H \le C \|v-v_h\|_H,
\label{eq:resol-est-2}
\end{equation}
from assumption (A-4).
Substituting $\phi = w-w_h$ into \eqref{eq:thm21-4} and recalling equations \eqref{eq:resol-2} and \eqref{eq:disc-resol-2}, we have
\begin{equation}
\|w-w_h\|_H^2
\le Ch \|g\|_H \times \left( 
|\lambda|^{1/2} \|\zeta-\zeta_h\|_H + \|\zeta-\zeta_h\|_V + \|\eta-\eta_h\|_Q 
 \right)
\end{equation}
for any $\zeta_h \in V_h$ and $\eta_h \in Q_h$.
Hence, together with (A-2), (A-3), and \eqref{eq:resol-est-2}, we obtain \eqref{eq:resol-error-2}.
\end{proof}

\section{Abstract results}
\label{sec:abstract-error}

This section is devoted to the error estimates for the abstract Stokes problems.
\begin{thm}\label{thm:abstract}
Let $u_0 \in H_\sigma$ and let $(u,p)$ and $(u_h,p_h)$ be the solutions of \eqref{eq:abstract-stokes} and \eqref{eq:disc-abstract-stokes}, respectively.
Assume that (A-1)--(A-4) hold.
Then, we have the following error estimates:
\begin{align}
\| u(t) - u_h(t) \|_V &\le C h t^{-1} \|u_0\|_H, \label{eq:error-u-V} \\
\| u(t) - u_h(t) \|_H &\le C h^2 t^{-1} \|u_0\|_H, \label{eq:error-u-H} \\
\| u_t(t) - u_{h,t}(t) \|_{V'} &\le C h t^{-1} \|u_0\|_H, \label{eq:error-ut} \\
\| p(t) - p_h(t) \|_Q &\le C h t^{-1} \|u_0\|_H, \label{eq:error-p} 
\end{align}
for all $t \in (0,T)$, where each constant $C$ depends only on the constants appearing in assumptions (A-1)--(A-4), but is independent of $h$, $u_0$, $t$, and $T$.
\qed
\end{thm}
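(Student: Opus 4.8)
The plan is to follow the contour-integral strategy of \cite{Oka82}, feeding the resolvent error estimates of \cref{thm:resol-V} into the representation \eqref{eq:duhamel}. For \eqref{eq:error-u-V}, I would write $u(t)-u_h(t) = \frac{1}{2\pi i}\int_\Gamma e^{t\lambda}\bigl[(\lambda+A)^{-1}P_\sigma - (\lambda+A_h)^{-1}P_{h,\sigma}\bigr]u_0\,d\lambda$, take $V$-norms under the integral, apply \eqref{eq:resol-error-1} to get the integrand bounded by $Ch\,|e^{t\lambda}|\,\|u_0\|_H$, and then estimate $\int_\Gamma |e^{t\lambda}|\,|d\lambda| = \int_0^\infty e^{-tr\cos\delta}\,dr \cdot 2 = C t^{-1}$ using that $\Gamma$ lies in the left half-plane with $\re\lambda = -r\cos\delta$. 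This gives \eqref{eq:error-u-V} immediately; \eqref{eq:error-u-H} is identical but uses \eqref{eq:resol-error-2} instead, picking up the extra factor $h$.

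For \eqref{eq:error-ut}, I would differentiate the representation in $t$, which brings down a factor $\lambda$: $u_t(t)-u_{h,t}(t) = \frac{1}{2\pi i}\int_\Gamma \lambda e^{t\lambda}\bigl[(\lambda+A)^{-1}P_\sigma - (\lambda+A_h)^{-1}P_{h,\sigma}\bigr]u_0\,d\lambda$. Naively bounding the integrand in $V$ by $|\lambda|\cdot Ch\|u_0\|_H$ makes $\int_\Gamma |\lambda||e^{t\lambda}||d\lambda| \sim t^{-2}$, which is the wrong power and is exactly the source of the spurious $t^{-3/2}$ in \cite{Oka82}. The key point is to measure the error in the weaker norm $V'$ and exploit a resolvent error estimate in $V'$ of the form $\|[(\lambda+A)^{-1}P_\sigma - (\lambda+A_h)^{-1}P_{h,\sigma}]g\|_{V'}\le Ch|\lambda|^{-1}\|g\|_H$ (or $Ch(1+|\lambda|)^{-1}\|g\|_H$), which I expect to be the genuinely new ingredient and the main obstacle — it must be proved separately, presumably by a duality argument pairing against $V$ and using \eqref{eq:resol-error-1} for the dual solution together with the resolvent estimate $\|\zeta\|_V\le C|\lambda|^{-1/2}\|\cdot\|_H$ from (A-4), so that the extra $|\lambda|^{-1/2}$ from each side combines to $|\lambda|^{-1}$. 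Granting that estimate, the factor $|\lambda|$ from the time derivative cancels against $|\lambda|^{-1}$, leaving $\int_\Gamma |e^{t\lambda}||d\lambda|\le Ct^{-1}$ and hence \eqref{eq:error-ut}.

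Finally, \eqref{eq:error-p} follows from \eqref{eq:error-ut} and \eqref{eq:error-u-V} via the discrete inf-sup condition. Subtracting the first equations of \eqref{eq:abstract-stokes} and \eqref{eq:disc-abstract-stokes}, for any $v_h\in V_h$ we get $b(v_h, p(t)-p_h(t)) = -(u_t(t)-u_{h,t}(t),v_h)_H - a(u(t)-u_h(t),v_h) + [\text{consistency terms from }p\notin Q_h]$; more carefully I would pick $q_h\in Q_h$ approximating $p(t)$ and write $\|p_h(t)-q_h\|_Q \le \beta_2^{-1}\sup_{v_h}|b(v_h,p_h(t)-q_h)|/\|v_h\|_V$, expanding $b(v_h,p_h(t)-q_h) = b(v_h,p_h(t)-p(t)) + b(v_h,p(t)-q_h)$ and using $b(v_h,p_h(t)-p(t)) = (u_t-u_{h,t},v_h)_H + a(u-u_h,v_h)$, which is bounded by $(\|u_t-u_{h,t}\|_{V'}+C\|u-u_h\|_V)\|v_h\|_V \le Cht^{-1}\|u_0\|_H\|v_h\|_V$; the term $b(v_h,p(t)-q_h)$ is handled by (A-3) together with $\|Bp(t)\|_H \le \|B\pi(t)\|_H$-type bounds coming from the resolvent estimate \eqref{eq:resol-est} integrated against the contour (equivalently $\|Bp(t)\|_H \le \|u_t(t)\|_H + \|Au(t)\|_H \le Ct^{-1}\|u_0\|_H$), giving $\|p(t)-q_h\|_Q \le Cht^{-1}\|u_0\|_H$. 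The triangle inequality then yields \eqref{eq:error-p}. Throughout, the $t$-independence and $T$-independence of the constants is automatic because every bound reduces to $\int_\Gamma |e^{t\lambda}||d\lambda| = Ct^{-1}$ with $C$ depending only on $\delta$, so no Gronwall-type exponential growth appears.
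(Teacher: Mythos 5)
Your overall architecture coincides with the paper's: the velocity estimates follow from \eqref{eq:duhamel} and \cref{thm:resol-V}, the estimate for $u_t$ hinges on a new $V'$-resolvent error bound of exactly the form you state, $\bigl\|[(\lambda+A)^{-1}P_\sigma-(\lambda+A_h)^{-1}P_{h,\sigma}]g\bigr\|_{V'}\le Ch|\lambda|^{-1}\|g\|_H$ (this is \cref{lem:error-ut}), and the pressure estimate is obtained from the discrete inf-sup condition together with \eqref{eq:error-u-V}, \eqref{eq:error-ut}, (A-3), and $Bp=-u_t-Au$, exactly as you describe. You also correctly identify the $V'$-lemma as the genuinely new ingredient.

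However, your proposed proof of that lemma is where the argument breaks down. You suggest an Aubin--Nitsche-type duality argument, using \eqref{eq:resol-error-1} for the dual solution and $\|\zeta\|_V\le C|\lambda|^{-1/2}\|v\|_H$, ``so that the extra $|\lambda|^{-1/2}$ from each side combines to $|\lambda|^{-1}$''. If you carry this out --- solve the dual resolvent problem with datum $v$, test with $\phi=w-w_h$, and use Galerkin orthogonality --- the term $\lambda(w-w_h,\zeta-\zeta_h)_H$ carries a prefactor $|\lambda|$ that exactly cancels the two factors of $|\lambda|^{-1/2}$ you hope to gain, and the remaining terms carry no $\lambda$-decay at all; the outcome is $\|w-w_h\|_{V'}\le Ch^2\|g\|_H$ uniformly in $\lambda$ (essentially a restatement of \eqref{eq:resol-error-2}), not $Ch|\lambda|^{-1}\|g\|_H$. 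Interpolating that with the trivial bound $\|w-w_h\|_{V'}\le C\|w-w_h\|_H\le Ch|\lambda|^{-1/2}\|g\|_H$ and inserting into the contour integral for $u_t-u_{h,t}$ gives $\int_\Gamma|\lambda e^{t\lambda}|\,h|\lambda|^{-1/2}\,|d\lambda|\sim ht^{-3/2}$, i.e.\ you land precisely on the $t^{-3/2}$ singularity of \cite{Oka82} that the theorem is designed to remove. The paper's proof of \cref{lem:error-ut} is not a duality argument: for $v\in V$ one picks $v_h$ as in (A-2) and splits $(w-w_h,v)_H=(w-w_h,v-v_h)_H+(w-w_h,v_h)_H$; the first term is bounded by the \emph{solution} bounds $\|w\|_H+\|w_h\|_H\le C|\lambda|^{-1}\|g\|_H$ (full $|\lambda|^{-1}$ decay, unlike the error bound) times $\|v-v_h\|_H\le Ch\|v\|_V$, and for the second one divides the Galerkin orthogonality relation $\lambda(w-w_h,v_h)_H=-a(w-w_h,v_h)-b(v_h,\pi-\pi_h)$ by $\lambda$ and invokes \eqref{eq:thm21-5} together with $\|v_h\|_V\le C\|v\|_V$. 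Without this step your proofs of \eqref{eq:error-ut} and hence of \eqref{eq:error-p} are incomplete; the rest of your argument is sound and matches the paper.
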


\begin{rem}
The error estimates for the velocity \eqref{eq:error-u-V} and \eqref{eq:error-u-H} are given in~\cite[Theorems~3.1 and~4.1]{Oka82}.
Although there is an error estimate for the pressure in~\cite{Oka82}, our result, \eqref{eq:error-p}, is strictly sharper.
Indeed, it was shown that the estimate
\begin{equation}
\| p(t) - p_h(t) \|_Q \le C h \left( t^{-1} + t^{-3/2} \right) \|u_0\|_H
\end{equation}
holds in~\cite[Theorem~5.1]{Oka82}.
\qed
\end{rem}

The estimates \eqref{eq:error-u-V} and \eqref{eq:error-u-H} are the consequence of the resolvent estimates \eqref{eq:resol-error-1} and \eqref{eq:resol-error-2},
and the estimate for the pressure \eqref{eq:error-p} is obtained from the discrete inf-sup condition \eqref{eq:disc-infsup}.
To show \eqref{eq:error-ut}, we need another resolvent estimate.

\begin{lem}\label{lem:error-ut}
Let $\delta \in (0,\pi/2)$.
Suppose that assumptions (A-1)--(A-4) hold.
Then, we have
\begin{equation}
\left\| \left[ (\lambda + A)^{-1}P_\sigma - (\lambda + A_h)^{-1} P_{h,\sigma}  \right] g \right\|_{V'} 
\le C h |\lambda|^{-1} \|g\|_H, \label{eq:resol-error-3}
\end{equation}
for any $g \in H$ and $\lambda \in \Sigma_\delta$,
where $C$ is independent of $h$, $g$, and $\lambda$.
\qed
\end{lem}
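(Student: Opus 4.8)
The plan is to argue by duality, parallel to the proof of the $H$-error bound \eqref{eq:resol-error-2} in \cref{thm:resol-V}, but testing against $V$ rather than against the error itself. Set $w=(\lambda+A)^{-1}P_\sigma g$, $w_h=(\lambda+A_h)^{-1}P_{h,\sigma}g$ and let $(\pi,\pi_h)$ be the corresponding pressures, so $(w,\pi)$, $(w_h,\pi_h)$ solve \eqref{eq:resol-2}, \eqref{eq:disc-resol-2}; since $w-w_h\in H\hookrightarrow V'$, we have $\|w-w_h\|_{V'}=\sup_{0\ne\phi\in V}|(w-w_h,\phi)_H|/\|\phi\|_V$. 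For each $\phi\in V$ I would solve the dual resolvent problem $a_\lambda(v,\zeta)+b(v,\eta)=(v,\phi)_H$ $(\forall v\in V)$, $b(\zeta,q)=0$ $(\forall q\in Q)$, together with its Galerkin approximation $(\zeta_h,\eta_h)$. Because $a$ is symmetric and $\overline\lambda\in\Sigma_\delta$, this dual problem satisfies \cref{assump:resolvent}, hence \cref{thm:resol-V} gives $|\lambda|^{1/2}\|\zeta-\zeta_h\|_H+\|\zeta-\zeta_h\|_V+\|\eta-\eta_h\|_Q\le Ch\|\phi\|_H$ and $\|\zeta-\zeta_h\|_H\le Ch^2\|\phi\|_H$, together with $|\lambda|\,\|\zeta\|_H+|\lambda|^{1/2}\|\zeta\|_V+\|A\zeta\|_H+\|B\eta\|_H\le C\|\phi\|_H$, where $\|\phi\|_H\le C\|\phi\|_V$.

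Testing the dual equation with $v=w-w_h$ and using the primal Galerkin orthogonality $a_\lambda(w-w_h,v_h)=-b(v_h,\pi-\pi_h)$ $(v_h\in V_h)$, together with $w\in V_\sigma$ and $w_h\in V_{h,\sigma}$, yields, for any $\zeta_h\in V_h$, $\eta_h\in Q_h$,
\[
(w-w_h,\phi)_H=a_\lambda(w-w_h,\zeta-\zeta_h)-b(\zeta_h,\pi-\pi_h)-b(w_h,\eta-\eta_h);
\]
choosing $(\zeta_h,\eta_h)$ to be the discrete dual solution, the dual Galerkin orthogonality and $\zeta\in V_\sigma$, $\zeta_h\in V_{h,\sigma}$ make all pressure terms cancel, so the identity collapses to $(w-w_h,\phi)_H=(g,\zeta-\zeta_h)_H$. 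Bounding the right-hand side in the three-term form, one uses $|a_\lambda(w-w_h,\zeta-\zeta_h)|\le C(|\lambda|^{1/2}\|w-w_h\|_H+\|w-w_h\|_V)(|\lambda|^{1/2}\|\zeta-\zeta_h\|_H+\|\zeta-\zeta_h\|_V)$ with \eqref{eq:thm21-5} for both problems; $b(\zeta_h,\pi-\pi_h)=-b(\zeta-\zeta_h,\pi-q_h)$ for $q_h\in Q_h$ (since $\zeta\in V_\sigma$, $\zeta_h\in V_{h,\sigma}$), controlled by \cref{assump:inf-sup}~(A-3) and $\|B\pi\|_H\le C\|g\|_H$; and $b(w_h,\eta-\eta_h)=b(w_h,\eta)=(B\eta,w-w_h)_H$, controlled by $\|B\eta\|_H\,\|w-w_h\|_H$. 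This gives the uniform bound $\|w-w_h\|_{V'}\le Ch^2\|g\|_H$.

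This already yields \eqref{eq:resol-error-3} whenever $h^2\le h|\lambda|^{-1}$, i.e. for $|\lambda|\le h^{-1}$; the delicate regime is $|\lambda|\ge h^{-1}$, where $h|\lambda|^{-1}<h^2$ and the smallness of the resolvent has to be exploited. Here I would insert $\zeta-\zeta_h=\lambda^{-1}\big[(P_\sigma-P_{h,\sigma})\phi-(A\zeta-A_h\zeta_h)\big]$ into the collapsed identity, obtaining $(w-w_h,\phi)_H=\lambda^{-1}\big[((P_\sigma-P_{h,\sigma})g,\phi)_H-(g,A\zeta-A_h\zeta_h)_H\big]$; the first term is $\le Ch|\lambda|^{-1}\|g\|_H\|\phi\|_V$ provided $\|(P_\sigma-P_{h,\sigma})g\|_{V'}\le Ch\|g\|_H$, a consistency estimate for the discrete solenoidal space that follows from \eqref{eq:infsup}, (A-1) and (A-3), while the remaining term still carries a bounded factor and must be absorbed by iterating the expansion or by interpolating with the $H$-bound $\|w-w_h\|_H\le C|\lambda|^{-1}\|g\|_H$ (and, for $A_h$, $\|A_h(\lambda+A_h)^{-1}\|\le C$, $\|w_h\|_V\le C|\lambda|^{-1/2}\|g\|_H$). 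The main obstacle is precisely this: tracking the $|\lambda|$-decay sharply enough to reach $|\lambda|^{-1}$ rather than a $\lambda$-independent $h^2$ or a mere $|\lambda|^{-1/2}$ — the pressure-term cancellation in the error representation is what makes the duality go through, but extracting the extra power of $h$ in the large-$|\lambda|$ regime rests on the consistency estimate for $V_{h,\sigma}$ combined with the resolvent decay, and I expect a case distinction in $|\lambda|h$ to be unavoidable.
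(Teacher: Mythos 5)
Your duality set-up is internally consistent up to the identity $(w-w_h,\phi)_H=(g,\zeta-\zeta_h)_H$ and the resulting uniform bound $\|w-w_h\|_{V'}\le Ch^2\|g\|_H$, but the proof is not complete: the regime $|\lambda|>h^{-1}$ --- which is exactly the regime that produces the $t^{-1}$ singularity when \eqref{eq:resol-error-3} is integrated along $\Gamma$ --- is left open, and the devices you suggest for it do not close the gap. In that regime both bounds at your disposal, $Ch^2\|g\|_H$ and the plain resolvent decay $C|\lambda|^{-1}\|g\|_H$, are strictly weaker than the target $Ch|\lambda|^{-1}\|g\|_H$, and no interpolation between them can do better than their geometric mean $Ch|\lambda|^{-1/2}\|g\|_H$. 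In the expansion $\zeta-\zeta_h=\lambda^{-1}\bigl[(P_\sigma-P_{h,\sigma})\phi-(A\zeta-A_h\zeta_h)\bigr]$, the term $\lambda^{-1}(g,A\zeta-A_h\zeta_h)_H$ is only $O(|\lambda|^{-1}\|g\|_H\|\phi\|_H)$ with no power of $h$, and assumptions (A-1)--(A-4) provide no estimate of the form $\|A\zeta-A_h\zeta_h\|\le Ch\cdots$. Indeed the full duality argument is intrinsically circular here: since $a$ is symmetric, the bound you would need, $\|\zeta-\zeta_h\|_H\le Ch|\lambda|^{-1}\|\phi\|_V$, is precisely the transpose of the statement being proved.

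The paper's proof avoids the dual problem entirely and is much shorter: it approximates the \emph{test function} rather than the solution. For $v\in V$ take $v_h\in V_h$ as in (A-2) and split $(w-w_h,v)_H=(w-w_h,v-v_h)_H+(w-w_h,v_h)_H$. The first term is bounded by $\|w-w_h\|_H\,\|v-v_h\|_H\le C|\lambda|^{-1}\|g\|_H\cdot Ch\|v\|_V$, using the elementary energy estimate $\|w\|_H+\|w_h\|_H\le C|\lambda|^{-1}\|g\|_H$. For the second, subtracting \eqref{eq:disc-resol-2} from \eqref{eq:resol-2} tested with $v_h$ gives $\lambda(w-w_h,v_h)_H=-a(w-w_h,v_h)-b(v_h,\pi-\pi_h)$, and \eqref{eq:thm21-5} together with $\|v_h\|_V\le C\|v\|_V$ yields $|(w-w_h,v_h)_H|\le Ch|\lambda|^{-1}\|g\|_H\|v\|_V$. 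The factor $|\lambda|^{-1}$ is thus extracted directly from the error equation while the factor $h$ comes from the already established $V$- and $Q$-norm error bounds; this is precisely the step your duality framework cannot reproduce for large $|\lambda|$.
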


\begin{proof}
Fix $g \in H$, $\delta \in (0,\pi/2)$, and $\lambda \in \Sigma_\delta$ arbitrarily, and
let $(w,\pi)$ and $(w_h,\pi_h)$ be the solutions of \eqref{eq:resol-2} and \eqref{eq:disc-resol-2}, respectively.
It is sufficient to show that
\begin{equation}
(w-w_h,v)_H \le C h |\lambda|^{-1} \|g\|_H \|v\|_V
\label{eq:lem31-1}
\end{equation}
holds for arbitrary $v \in V$, 
since $\| F \|_{V'} = \sup_{v \in V} (F,v)_H / \|v\|_V$ for $F \in H \hookrightarrow V'$.
Fix $v \in V$ and choose $v_h \in V_h$ as in (A-2).
Then,
\begin{equation}
(w-w_h,v)_H = (w-w_h,v-v_h)_H + (w-w_h,v_h)_H =: I_1 + I_2.
\end{equation}
Since a standard energy method yields $\|w\|_H+\|w_h\|_H \le C|\lambda|^{-1}\|g\|_H$, we have
\begin{equation}
|I_1| \le C |\lambda|^{-1}\|g\|_H \cdot h \|v\|_V
\end{equation}
from assumption (A-2).
Moreover, equations \eqref{eq:resol-2} and \eqref{eq:disc-resol-2} imply
\begin{equation}
\lambda I_2 = -a(w-w_h,v_h) - b(v_h,\pi-\pi_h).
\end{equation}
Together with \eqref{eq:thm21-5}, we have
\begin{equation}
|I_2| \le Ch|\lambda|^{-1} \|g\|_H,
\end{equation}
which gives \eqref{eq:lem31-1}.
Hence we can complete the proof.
\end{proof}

\begin{proof}[Proof of \cref{thm:abstract}]
(1) Proof of \eqref{eq:error-u-V} and \eqref{eq:error-u-H}.
The derivation of these estimates was originally presented in \cite{Oka82}.
Indeed, one can check \eqref{eq:error-u-V} and \eqref{eq:error-u-H} directly from equation \eqref{eq:duhamel} and \cref{thm:resol-V}.

(2) Proof of \eqref{eq:error-ut}.
From \eqref{eq:duhamel} and \cref{lem:error-ut}, we have
\begin{equation}
\| u_t(t) - u_{h,t}(t) \|_{V'} 
\le \int_\Gamma |\lambda e^{t\lambda}| \cdot Ch |\lambda|^{-1} \|u_0\|_H |d\lambda| 
\le C h t^{-1} \|u_0\|_H, 
\end{equation}
where $\Gamma = \partial\Sigma_\delta$ for an arbitrary $\delta \in (0,\pi/2)$.

(3) Proof of \eqref{eq:error-p}.
Fix $q_h \in Q_h$ arbitrarily.
Then, by the discrete inf-sup condition \eqref{eq:disc-infsup}, we have
\begin{equation}
\beta_2 \| p_h(t) - q_h \|_Q
\le \sup_{v_h \in V_h} \frac{b(v_h,p_h(t) - q_h)}{\|v_h\|_V} .
\end{equation}
The equations \eqref{eq:abstract-stokes} and \eqref{eq:disc-abstract-stokes} yield
\begin{equation}
b(v_h, p_h(t) - q_h) = b(v_h,p(t)-q_h) + (u_t(t)-u_{h,t}(t),v_h)_H + a(u(t)-u_h(t),v_h),
\end{equation}
which leads to
\begin{equation}
\| p_h(t) - q_h \|_Q \le C \left( \| p(t)-q_h\|_Q + ht^{-1}\|u_0\|_H \right) \|v_h\|_V
\end{equation}
from \eqref{eq:error-u-V} and \eqref{eq:error-ut}.
Therefore, noting $p-p_h = (p-q_h) + (q_h-p_h)$, we have
\begin{equation}
\|p(t)-p_h\|_Q \le C \|p(t)-q_h\|_Q + Cht^{-1}\|u_0\|_H.
\end{equation}
Finally, choosing $q_h \in Q_h$ as in assumption (A-3), we obtain
\begin{equation}
\|p(t)-q_h\|_Q \le Ch\|Bp(t)\|_H \le C h t^{-1} \|u_0\|_H,
\end{equation}
since $Bp = -u_t - Au$ and $u(t) = e^{-tA}u_0$.
This completes the proof.
\end{proof}

\begin{rem}\label{rem:inhomogeneous}
Throughout this section, we have considered the homogeneous problem \eqref{eq:abstract-stokes}.
We now consider the inhomogeneous problem
\begin{equation}
\begin{cases}
(u_t,v)_H + a(u,v) + b(v,p) = \langle f,v \rangle_{V,V'}, & \forall v \in V, \\
b(u,q) = 0, & \forall q \in Q,
\end{cases}
\end{equation}
with external force $f \colon (0,T) \to V'$.
If $f \in C^\theta([0,T]; H)$ for some $\theta \in (0,1]$, then we have
\begin{equation}
\| u(t)-u_h(t) \|_H + h \| u(t)-u_h(t)\|_V
\le Ch \left( t^{-1} \| u_0 \|_H + t^\theta |f|_{C^\theta([0,T];H)} + \|f(t)\|_H \right)
\end{equation}
by the same argument as in \cite[\S~5]{FujM76}.
However, we cannot extend this result to the $V'$-error estimate and the pressure estimate at present.
\qed
\end{rem}

\section{Applications}
\label{sec:application}

In this section, we apply \cref{thm:abstract} to the non-stationary Stokes and the hydrostatic Stokes problem.
Hereafter, $L^2(\Omega)$ and $H^s(\Omega)$ denote the Lebesgue and Sobolev spaces, respectively.

\subsection{Non-stationary Stokes equation}
\label{subsec:stokes}

Let $\Omega \subset \bR^d$ ($d=2,3$) be a convex polygonal or polyhedral domain.
We consider the non-stationary Stokes equations in $\Omega$ with the homogeneous Dirichlet boundary condition:
\begin{equation}
\begin{cases}
(u_t(t), v) + (\nabla u(t), \nabla v) - (\dv v, p(t)) = 0, & \forall v \in H^1_0(\Omega)^d, \\
(\dv u(t), q) = 0, & \forall q \in L^2_0(\Omega), \\
u(0) = u_0,
\end{cases}
\label{eq:stokes}
\end{equation}
where $u_0 \in L^2_\sigma(\Omega)$.
Here, we use the usual notation
\begin{align}
H^1_0(\Omega) &= \{ v \in H^1(\Omega) \mid v|_{\partial\Omega} = 0 \}, \\
L^2_0(\Omega) &= \{ q \in L^2(\Omega) \mid \textstyle\int_\Omega q = 0 \}, \\
H^1_{0,\sigma}(\Omega) &= \{ v \in H^1_0(\Omega)^d \mid \dv v = 0 \}, \\
L^2_\sigma(\Omega) &= \overline{H^1_{0,\sigma}(\Omega)}^{\| \cdot \|_{L^2}},
\end{align}
and let $(\cdot, \cdot)$ denote the $L^2$-inner product over $\Omega$.
Let $H=L^2(\Omega)^d$, $V=H^1_0(\Omega)^d$, $Q=L^2_0(\Omega)$, $a(u,v)=(\nabla u, \nabla v)$, and $b(v,q)=-(\dv v, q)$.
Then, $H_\sigma = L^2_\sigma(\Omega)$ and $V_\sigma = H^1_{0,\sigma}(\Omega)$.
We also define the operators $A$ and $B$ as in Section~\ref{sec:preliminary}.
The domain of $A$ coincides with $H^2(\Omega) \cap V_\sigma$ and the estimate
\begin{equation}
\| v \|_{H^2} \le C \|Av\|_{L^2}, \quad \forall v \in D(A),
\label{eq:regularity-stokes}
\end{equation}
holds since $\Omega$ is a convex polygon or polyhedron 
(see~\cite{KelO76} for the 2D case and~\cite{Dau89} for the 3D case).
Hence, assumption (A-4) holds.

Next, we consider the finite element approximation of \eqref{eq:stokes}.
Let $\cT_h$ be a conforming triangulation (or tetrahedralization) of $\Omega$ with parameter $h=\max_{K \in \cT_h} \operatorname{diam} K$.
We define the pair of approximation spaces $(V_h, Q_h)$ as the $P^1$-bubble-$P^1$ element (MINI element) or $P^2$-$P^1$ element (Taylor-Hood) with respect to $\cT_h$.
For the precise definition, see, e.g.,~\cite{ErnG04}.
We set $V_{h,\sigma}$ as in \eqref{eq:Vhsigma}.
If $\cT_h$ is shape-regular and quasi-uniform, then we can check that assumptions (A-1)--(A-3) hold, together with \eqref{eq:regularity-stokes} (see,~e.g., \cite{BreS08,ErnG04}).

The approximation scheme is as follows.
Find $(u_h(t), p_h(t) \in V_h \times Q_h$ which satisfies
\begin{equation}
\begin{cases}
(u_{h,t}(t), v_h) + (\nabla u_h(t), \nabla v_h) - (\dv v_h, p_h(t)) = 0, & \forall v_h \in V_h, \\
(\dv u_h(t), q_h) = 0, & \forall q_h \in Q_h, \\
u_h(0) = P_{h,\sigma} u_0,
\end{cases}
\label{eq:disc-stokes}
\end{equation}
where $P_{h,\sigma}$ is the $L^2$-projection onto $V_{h,\sigma}$, as in \eqref{eq:disc-abstract-stokes}. 
Then, since we have already checked that assumptions (A-1)--(A-4) hold, we can state the following error estimates.

\begin{thm}\label{thm:stokes}
Let $\Omega$ be a convex polygonal or polyhedral domain, $\cT_h$ be a shape-regular and quasi-uniform triangulation of $\Omega$, and $(V_h,Q_h)$ be the pair of finite elements mentioned above.
Let $(u,p)$ and $(u_h,p_h)$ be the solutions of \eqref{eq:stokes} and \eqref{eq:disc-stokes}, respectively, for the initial value $u_0 \in L^2_\sigma(\Omega)$.
Then, we have the following error estimates:
\begin{align}
\| u(t) - u_h(t) \|_{H^1} &\le C h t^{-1} \|u_0\|_{L^2},\\
\| u(t) - u_h(t) \|_{L^2} &\le C h^2 t^{-1} \|u_0\|_{L^2}, \\
\| u_t(t) - u_{h,t}(t) \|_{H^{-1}} &\le C h t^{-1} \|u_0\|_{L^2}, \\
\| p(t) - p_h(t) \|_{L^2} &\le C h t^{-1} \|u_0\|_{L^2}, 
\end{align}
for all $t > 0$, where each constant $C$ is independent of $h$, $u_0$, $t$, and $T$.
\qed
\end{thm}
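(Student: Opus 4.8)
The plan is to derive \cref{thm:stokes} as a direct corollary of the abstract \cref{thm:abstract}. With the identifications $H = L^2(\Omega)^d$, $V = H^1_0(\Omega)^d$, $Q = L^2_0(\Omega)$, $a(u,v) = (\nabla u,\nabla v)$, $b(v,q) = -(\dv v, q)$, and $(V_h,Q_h)$ the MINI or Taylor--Hood pair, one has $H_\sigma = L^2_\sigma(\Omega)$, $V_\sigma = H^1_{0,\sigma}(\Omega)$, and $V' = H^{-1}(\Omega)^d$. Once the hypotheses (A-1)--(A-4) are verified in this setting, it only remains to translate norms: by the Poincar\'e inequality $\|\cdot\|_V$ is equivalent to $\|\cdot\|_{H^1}$, while $\|\cdot\|_H = \|\cdot\|_{L^2}$ and $\|\cdot\|_Q = \|\cdot\|_{L^2}$, all with constants independent of $h$. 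Substituting these into \eqref{eq:error-u-V}--\eqref{eq:error-p} and reading $\|\cdot\|_{V'}$ as $\|\cdot\|_{H^{-1}}$ yields exactly the four displayed estimates, so the entire content of the theorem lies in checking (A-1)--(A-4).

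For (A-4) I would argue as follows. Coercivity of $a$ (which is just the Poincar\'e inequality) makes $-A$ the generator of a bounded analytic semigroup on $L^2_\sigma(\Omega)$; testing \eqref{eq:resol-2} with $v = w$ and using the elementary bound $|a_\lambda(w,w)| \ge c(|\lambda|\,\|w\|_H^2 + \|w\|_V^2)$ for $\lambda \in \Sigma_\delta$ against $|a_\lambda(w,w)| = |(g,w)_H|$ gives $|\lambda|\,\|w\|_H + |\lambda|^{1/2}\|w\|_V \le C\|g\|_H$. The resolvent identity $Aw = P_\sigma g - \lambda w$ then yields $\|Aw\|_H \le C\|g\|_H$ for free, while the identification $D(A) = H^2(\Omega)^d \cap V_\sigma$ with $\|w\|_{H^2} \le C\|Aw\|_{L^2}$ is precisely the elliptic (Stokes) regularity on convex domains, \cite{KelO76} in two dimensions and \cite{Dau89} in three. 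Finally, once $w \in H^2$, equation \eqref{eq:resol-2} reads $\lambda w - \Delta w + \nabla\pi = g$ in $L^2(\Omega)^d$, so $\pi \in H^1(\Omega) \cap L^2_0(\Omega)$ with $B\pi = \nabla\pi = g - \lambda w + \Delta w$ (integration by parts) and $\|B\pi\|_H = \|\nabla\pi\|_{L^2} \le \|g\|_H + |\lambda|\,\|w\|_H + \|\Delta w\|_{L^2} \le C\|g\|_H$; thus $\pi \in D(B)$ and the full estimate \eqref{eq:resol-est} holds.

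For (A-1)--(A-3) I would invoke standard finite element theory \cite{BreS08,ErnG04}. The uniform discrete inf-sup condition \eqref{eq:disc-infsup} for the MINI and Taylor--Hood pairs on shape-regular quasi-uniform meshes is classical, via Fortin's criterion and a stable Fortin operator. The approximation properties (A-2) and (A-3) follow by combining the interpolation errors of a Scott--Zhang (or Cl\'ement) operator into $V_h$ and the $L^2$-projection onto $Q_h$ with the regularity \eqref{eq:regularity-stokes}: the stability $\|v_h\|_V \le C\|v\|_V$ and the bound $\|v - v_h\|_H \le Ch\|v\|_V$ are properties of the interpolant, and $\|v - v_h\|_V \le Ch\|v\|_{H^2} \le Ch\|Av\|_H$ chains the interpolation error with \eqref{eq:regularity-stokes}. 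Quasi-uniformity is what makes all of these constants $h$-independent.

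I expect the only genuinely delicate input to be the $H^2$-regularity of the Stokes system on a convex polyhedron in three dimensions, that is, $D(A) = H^2(\Omega)^d \cap V_\sigma$ together with \eqref{eq:regularity-stokes}; convexity is essential there and the result rests on \cite{Dau89}. Everything else is routine bookkeeping. I would therefore present the proof compactly: record the identifications above, cite \cite{KelO76,Dau89} to establish (A-4), cite \cite{BreS08,ErnG04} for (A-1)--(A-3) of the MINI and Taylor--Hood pairs, and conclude by applying \cref{thm:abstract} and rewriting the abstract norms in their concrete form.
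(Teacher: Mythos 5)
Your proposal is correct and follows essentially the same route as the paper: the paper likewise reduces \cref{thm:stokes} to \cref{thm:abstract} by verifying (A-4) from the convex-domain Stokes regularity \eqref{eq:regularity-stokes} via \cite{KelO76,Dau89} and (A-1)--(A-3) from standard MINI/Taylor--Hood theory \cite{BreS08,ErnG04}, then identifying $V'$ with $H^{-1}(\Omega)^d$. Your additional detail on deriving the resolvent estimate (testing with $v=w$, using $Aw = P_\sigma g - \lambda w$, and recovering $\nabla\pi$ from the strong form) is a useful elaboration of what the paper leaves implicit but is not a different argument.
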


\begin{rem}\label{rem:optimal}
For the MINI element, the convergence rate is optimal.
However, for the Taylor-Hood element, it is not.
We leave the investigation of the optimal order error estimates for higher order elements as an area for future work.
\qed
\end{rem}

\subsection{Non-stationary hydrostatic Stokes equation}
\label{subsec:h-stokes}

The second example is the hydrostatic Stokes problem, which is a linearized form of the primitive equations.
Let $G=(0,1) \subset \bR^2$ and $\Omega = G \times (-D,0) \subset \bR^3$ with $D>0$.
The unknown functions of the hydrostatic Stokes equations are the horizontal velocity $u \colon \Omega \times (0,T) \to \bR^2$ and the surface pressure $p \colon G \times (0,T) \to \bR$, as follows.
\begin{equation}
\begin{cases}
u_t - \Delta u + \nabla_H p = 0,  & \text{in } \Omega \times (0,T), \\
\dv_H \bar{u} = 0, & \text{in } \Omega \times (0,T), \\
u(\cdot, 0) = u_0, & \text{in } \Omega,
\end{cases}
\label{eq:h-stokes}
\end{equation}
with boundary conditions
\begin{equation}
\begin{cases}
\partial_z u = 0, & \text{on } \Gamma_u := G \times \{ 0 \}, \\
u = 0, & \text{on } \Gamma_b := G \times \{ -D \}, \\
\text{$u$ is periodic} & \text{on } \Gamma_l := \partial G \times (-D,0),
\end{cases}
\label{eq:h-stokes-bc}
\end{equation}
where $\nabla_H q= (\partial_x q, \partial_y q)^T$, $\dv_H v = \partial_x v_1 + \partial_y v_2$, and $\bar{v}(x,y) = \int_{-D}^{0} v(x,y,z) dz$.

Let $H = L^2(\Omega)^2$, $V = \{ v \in H^1(\Omega)^2 \mid v|_{\Gamma_b} = 0 \text{ and } v|_{\Gamma_l} \text{ is periodic} \}$, $Q=L^2_0(G)$, $a(u,v) = (\nabla u, \nabla v)_\Omega$ for $u,v \in V$, and $b(v,q) = -(\dv_H \bar{v}, q)_G$ for $(v,q) \in V \times Q$, where $(\cdot, \cdot)_X$ denotes the $L^2$-inner product over a domain $X$.
Then, the weak formulation of the problem \eqref{eq:h-stokes} and \eqref{eq:h-stokes-bc} is described by equation \eqref{eq:abstract-stokes}, i.e., 
\begin{equation}
\begin{cases}
(u_t(t), v)_\Omega + (\nabla u, \nabla v)_\Omega - (\dv_H \bar{v}, p)_G = 0, & \forall v \in V, \\
(\dv_H \bar{u}, q)_G = 0, & \forall q \in Q.
\end{cases}
\label{eq:weak-h-stokes}
\end{equation}
Thus, we can construct a finite element scheme for the hydrostatic Stokes equations.
Let $\cT_h$ be a tetrahedralization (a set of open tetrahedra) of $\Omega$ with $h = \max_{K \in \cT_h} \operatorname{diam} K$, and let $\tilde{\cT}_h$ be the triangulation of $G$ induced by $\cT_h$.
Namely,
\begin{equation}
\tilde{\cT}_h = \{ T \subset G \mid \exists K \in \cT_h \text{ s.t.\ } \overline{T} = \overline{G} \cap \partial K \},
\end{equation}
where $\overline{T}$ and $\overline{G}$ are the closures in $\bR^2$.
We suppose that
\begin{itemize}[label=\textbullet]
\item $V_h$ is the space of $P^2$-finite elements or $P^1$-bubble finite elements with respect to $\cT_h$, and each $v_h \in V_h$ vanishes on $\Gamma_b$ and is periodic on $\Gamma_l$,
\item $Q_h$ is the space of $P^1$-finite elements with respect to $\tilde{\cT}_h$ and $\int_G q_h = 0$ for each $q_h \in Q_h$.
\end{itemize}
Then, we can introduce the finite element approximations for \eqref{eq:h-stokes} and \eqref{eq:h-stokes-bc} as follows.
Find $u_h \colon (0,T) \to V_h$ and $p_h \colon (0,T) \to Q_h$ which satisfy
\begin{equation}
\begin{cases}
(u_{h,t}(t), v_h)_\Omega + (\nabla u_h, \nabla v_h)_\Omega - (\dv_H \bar{v}_h, p_h)_G = 0, & \forall v_h \in V_h, \\
(\dv_H \bar{u}_h, q_h)_G = 0, & \forall q_h \in Q_h, \\
(u_h(0), v_h)_\Omega = (u_0, v_h)_\Omega, & \forall v_h \in V_{h,\sigma},
\end{cases}
\label{eq:disc-h-stokes}
\end{equation}
where $V_{h,\sigma}$ is defined by \eqref{eq:Vhsigma}.

In order to discuss the error estimates in the framework of \cref{thm:abstract}, we should check conditions (A-1)--(A-4).
Let $A$ be the operator defined by \eqref{eq:operator-A}, which is called the hydrostatic Stokes operator in the present case.
Then, it is known that $0 \in \rho(A)$, $A$ generates a bounded analytic semigroup on $H_\sigma$, and $A$ satisfies the regularity property
\begin{equation}
\| v \|_{H^2(\Omega)} \le C \|Av\|_{L^2(\Omega)}, \quad \forall v \in D(A).
\end{equation}
We refer to \cite[Theorem~3.1]{HieK16} for the proof.
Therefore, we can check that conditions (A-2)--(A-4) hold.

Finally, we confirm (A-1) holds by introducing a prismatic mesh.
\begin{defi}
We say that a tetrahedralization $\cT_h$ of $\Omega$ is \emph{prismatic} if the following condition holds: 
for each $K \in \cT_h$, there exists $T \in \tilde{\cT}_h$ such that
\begin{equation}
K \subset P_T := \{ (x,y,z) \in \Omega \mid (x,y,0) \in T \},
\end{equation}
where $\tilde{\cT}_h$ is the triangulation of $G$ induced by $\cT_h$.
\qed
\end{defi}
We can construct such a mesh by the following procedure.
\begin{enumerate}
\item Triangulate the surface $G$ and denote the triangulation by $\tilde{\cT}_h$.
\item Construct a prism $P_T$ in $\Omega$ for each $T \in \tilde{\cT}_h$.
\item Decompose each prism $P_T$ into tetrahedra so that the set of tetrahedra becomes a conforming tetrahedralization of $\Omega$.
\end{enumerate}
In \cite{ChaG00}, it is proved that the pair $(V_h,Q_h)$ mentioned above satisfies the discrete inf-sup condition \eqref{eq:disc-infsup}, provided that the mesh is prismatic.
Indeed, if the mesh is prismatic, then we can extend a function $q_h \in Q_h$ naturally to a piecewise linear function over the mesh $\cT_h$, and thus we can use the usual inf-sup condition for the MINI element or Taylor-Hood element.
Hence, we can confirm (A-1) holds.

Therefore, we can apply \cref{thm:abstract} and obtain the following error estimates.
\begin{thm}\label{thm:h-stokes}
Let $\Omega$, $\cT_h$, and $(V_h,Q_h)$ be as described above.
Assume that the mesh $\cT_h$ is shape-regular, quasi-uniform, and prismatic.
Let $(u,p)$ and $(u_h,p_h)$ be the solutions of \eqref{eq:h-stokes} and \eqref{eq:disc-h-stokes}, respectively, for the initial value $u_0 \in L^2(\Omega)^2$ satisfying $\dv_H \bar{u_0} = 0$ in the distributional sense.
Then, we have the following error estimates:
\begin{align}
\| u(t) - u_h(t) \|_{H^1(\Omega)} &\le C h t^{-1} \|u_0\|_{L^2(\Omega)},\\
\| u(t) - u_h(t) \|_{L^2(\Omega)} &\le C h^2 t^{-1} \|u_0\|_{L^2(\Omega)}, \\
\| u_t(t) - u_{h,t}(t) \|_{V'} &\le C h t^{-1} \|u_0\|_{L^2(\Omega)}, \\
\| p(t) - p_h(t) \|_{L^2(G)} &\le C h t^{-1} \|u_0\|_{L^2(\Omega)}, 
\end{align}
for all $t > 0$, where each constant $C$ is independent of $h$, $u_0$, $t$, and $T$, and $V'$ is the dual space of $V = \{ v \in H^1(\Omega)^2 \mid v|_{\Gamma_b} = 0 \text{ and } v|_{\Gamma_l} \text{ is periodic} \}$.
\qed
\end{thm}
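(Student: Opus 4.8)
The strategy is to cast \eqref{eq:h-stokes}--\eqref{eq:h-stokes-bc} in the abstract form of Section~\ref{sec:preliminary} with $H=L^2(\Omega)^2$, $V=\{v\in H^1(\Omega)^2 : v|_{\Gamma_b}=0,\ v|_{\Gamma_l}\text{ periodic}\}$, $Q=L^2_0(G)$, $a(u,v)=(\nabla u,\nabla v)_\Omega$, $b(v,q)=-(\dv_H\bar v,q)_G$, and then to verify assumptions (A-1)--(A-4) so that \cref{thm:abstract} applies verbatim. First I would record the structural facts: $a$ is symmetric, and since every $v\in V$ vanishes on the bottom face $\Gamma_b$, a Poincar\'e inequality in the vertical direction gives the coercivity $\re a(v,v)\ge\alpha\|v\|_{H^1(\Omega)}^2$; the continuous inf-sup condition \eqref{eq:infsup} follows by lifting a bounded right inverse $\phi\in H^1(G)^2$ of $\dv_H$ on $L^2_0(G)$ and multiplying it by a fixed vertical profile that vanishes at $\Gamma_b$ and has unit average, so that the resulting $v\in V$ satisfies $\bar v=\phi$. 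One also checks that the hypothesis "$u_0\in L^2(\Omega)^2$ with $\dv_H\bar u_0=0$ in the distributional sense" is equivalent to $u_0\in H_\sigma=\overline{V_\sigma}^{\|\cdot\|_H}$ by a standard density argument, so that \cref{thm:abstract} is indeed applicable to this $u_0$.

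Next I would verify (A-4). The stationary resolvent problem \eqref{eq:resol-2} is, up to notation, the one analyzed in \cite[Theorem~3.1]{HieK16}: it yields $0\in\rho(A)$, generation of a bounded analytic semigroup on $H_\sigma$, and the elliptic regularity $\|w\|_{H^2(\Omega)}\le C\|Aw\|_{L^2(\Omega)}$ for $w\in D(A)$. Combining this with the standard sectorial resolvent bound $|\lambda|\|w\|_H+|\lambda|^{1/2}\|w\|_V\le C\|g\|_H$, valid for a coercive symmetric form on $\Sigma_\delta$, gives the first three terms of \eqref{eq:resol-est}; the pressure term $\|B\pi\|_H$ is controlled because, by the first line of \eqref{eq:resol-2}, $B\pi$ is the $H$-representative of $g-\lambda w-a(w,\cdot)$, so that $\pi\in D(B)=\{q\in H^1(G)\cap L^2_0(G) : q|_{\partial G}\text{ periodic}\}$ with $\|B\pi\|_H=\sqrt D\,\|\nabla_H\pi\|_{L^2(G)}\le C\|g\|_H$.

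For the approximation properties (A-2) and (A-3) I would combine the regularity just recorded with standard interpolation estimates on the shape-regular, quasi-uniform mesh. For $v\in D(A)\subset H^2(\Omega)^2\cap V$, a quasi-interpolant $v_h\in V_h$ in the $P^2$ or $P^1$-bubble space, constructed so as to respect the condition on $\Gamma_b$ and the periodicity on $\Gamma_l$, satisfies $\|v_h\|_V\le C\|v\|_V$, $\|v-v_h\|_H\le Ch\|v\|_V$, and $\|v-v_h\|_V\le Ch\|v\|_{H^2(\Omega)}\le Ch\|Av\|_H$, which is (A-2). For $q\in D(B)$, the $P^1$ interpolant on $\tilde\cT_h$, adjusted to have zero mean over $G$, satisfies $\|q-q_h\|_{L^2(G)}\le Ch|q|_{H^1(G)}\le Ch\|Bq\|_H$, which is (A-3); the mean-zero correction is itself $O(h)$ in $L^2(G)$ and hence harmless.

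The main obstacle is (A-1), the discrete inf-sup condition, because $V_h$ is built on the three-dimensional mesh $\cT_h$ while $Q_h$ lives on the two-dimensional induced mesh $\tilde\cT_h$, and the vertical averaging in $b$ couples the two discretizations; for a generic $\cT_h$ there is no reason for stability. The prismatic hypothesis resolves this: each tetrahedron of $\cT_h$ sits inside a vertical prism over a triangle of $\tilde\cT_h$, so a function $q_h\in Q_h$ extends $z$-independently to a genuine piecewise-linear function $\hat q_h$ on $\cT_h$ with $\|\hat q_h\|_{L^2(\Omega)}=\sqrt D\,\|q_h\|_{L^2(G)}$ and $\int_\Omega\hat q_h=0$, and one has $b(v_h,q_h)=-(\dv_H v_h,\hat q_h)_\Omega$ for every $v_h\in V_h$. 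The discrete inf-sup bound for the pair $(V_h,Q_h)$ then reduces to the classical inf-sup stability of the MINI or Taylor--Hood pair in three dimensions, a reduction carried out in \cite{ChaG00}, which I would cite for (A-1). With (A-1)--(A-4) in hand, \cref{thm:abstract} applies directly, and reading its four estimates back through the identifications of $H$, $V$, and $Q$ yields exactly the estimates claimed in \cref{thm:h-stokes}.
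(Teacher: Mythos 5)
Your proposal is correct and follows essentially the same route as the paper: identify the hydrostatic Stokes problem with the abstract setting of Section~\ref{sec:preliminary}, verify (A-2)--(A-4) via the regularity and analyticity results of \cite[Theorem~3.1]{HieK16}, verify (A-1) via the prismatic-mesh extension argument of \cite{ChaG00}, and then invoke \cref{thm:abstract}. You supply more detail than the paper does (the vertical Poincar\'e inequality for coercivity, the explicit identification of $D(B)$ and the identity $\|Bq\|_H=\sqrt{D}\,\|\nabla_H q\|_{L^2(G)}$, the mean-zero correction in (A-3)), but the underlying argument is the same.
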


\section{Concluding remarks}
\label{sec:conclusion}

In the present paper, we considered the abstract non-stationary saddle-point problem \eqref{eq:abstract-stokes} and its finite element approximation \eqref{eq:disc-abstract-stokes}.
Our main contribution (\cref{thm:abstract}) is the derivation of error estimates for the velocity and the pressure in various norms.
In particular, the error estimate for the pressure with the optimal singularity (i.e., the term $t^{-1}$) is a new result.
We then applied this result to establish error estimates for the finite element approximation for the non-stationary Stokes and the hydrostatic Stokes equations.
However, as mentioned in \cref{rem:inhomogeneous}, we have not obtained the error estimates for the pressure for inhomogeneous problems.
Moreover, the convergence rate is not optimal for finite elements of higher degree (\cref{rem:optimal}). Furthermore, we should consider a numerical analysis for the primitive equations \eqref{eq:primitive} in the framework of analytic semigroup theory, as performed in \cite{Oka82b} for the two-dimensional Navier-Stokes equations. These problems remain an area for future work.

\begin{acknowledgements}
The author would like to thank Professor Matthias Hieber for suggesting this topic and encouraging the author through valuable discussions during the author's stay in TU Darmstadt.
This work was supported by 
the Program for Leading Graduate Schools, MEXT, Japan,
JSPS KAKENHI grant number 15J07471,
JSPS-DFG Japanese-German Graduate Externship,
and IRTG~1529 on Mathematical Fluid Dynamics.
\end{acknowledgements}

\bibliographystyle{plain}

\end{document}